\newtheorem{theorem}{Theorem}
\theoremstyle{plain}
\newtheorem{lemma}[theorem]{Lemma}
\newtheorem{remark}[theorem]{Remark}
\numberwithin{equation}{section}
\numberwithin{theorem}{section}
\newcommand{\cU}{\mathcal{U}}
\newcommand{\cJ}{\mathcal{J}}
\newcommand{\cF}{\mathcal{F}}
\newcommand{\cW}{\mathcal{W}}
\newcommand{\cB}{\mathcal{B}}
\newcommand{\E}{\mathbb{E}}
\newcommand{\R}{\mathbb{R}}
\newcommand{\F}{\mathbb{F}}
\newcommand{\N}{\mathbb{N}}
\newcommand{\bP}{\mathbb{P}} 
\newcommand{\wt}{\widetilde}
\newcommand{\Ito}{It\^{o}}
\newcommand{\e}{\varepsilon}
\begin{document}

\title
{Optimal Actuator Location of the Minimum Norm Controls for Stochastic Heat Equations}
\author{Donghui Yang\thanks{School of Mathematics and Statistics,
    Central South University, Changsha, 410075, P.R. China (\tt{donghyang@139.com}).}\quad and\quad Jie Zhong\thanks{Department of Mathematics,
    University of Rochester, Rochester, NY 14627, USA (\tt{jiezhongmath@gmail.com}).}}

\maketitle

\begin{abstract}
In this paper, we study the approximate controllability for the stochastic
  heat equation over measurable sets, and the optimal actuator location of the
  minimum norm controls. We formulate a relaxed optimization problem for both
  actuator location and its corresponding minimum norm control into a two-person
  zero sum game problem and develop a sufficient and necessary condition for the
  optimal solution via Nash equilibrium. At last, we prove that the relaxed optimal solution is an optimal actuator location for the classical problem.
\end{abstract}

{\bf Keywords:}~ stochastic heat equation, minimal norm control, optimal actuator
   location, Nash equilibrium. \vspace{0.2cm}

{\bf AMS subject classifications:}~  35K05, 49J20, 93B05, 93B07, 93E20.  

\section{Introduction}
Let $\F = (\Omega, \cF,\{\cF_t\}_{t\ge 0}, \bP)$ be a stochastic
basis with usual conditions. On $\F$, we define a standard scalar Wiener
process $W = \{w(t)\}_{t\ge 0}$. For simplicity, we assume that the
filtration 
$\{\cF_t\}_{t\ge0}$ is generated by $W$.

Fix $T>0$.

Given a Hilbert space $H$, we denote by $L^2_\cF(0,T;H)$ the Banach
space consisting of all $H$-valued $\{\cF_t\}_{t\ge0}$-adapted
processes $X$ such that the square of the canonical norm  $\E\|X(\cdot)\|^2_{L^2(0,T;H)}
<\infty$; denote by
$L^\infty_\cF(0,T;H)$ the Banach space consisting of all $H$-valued
$\{\cF_t\}_{t\ge0}$-adapted bounded processes, with the essential
supremum norm; and denote by $L^2_\cF(\Omega;C([0,T];H))$ the Banach
space consisting of all $H$-valued $\{\cF_t\}_{t\ge0}$-adapted continuous
processes $X$ such that the square of the canonical norm $\E
\|X(\cdot)\|^2_{C(0,T;H)}<\infty$. For any $t\in[0,T]$, the space
$L^2(\Omega,\cF_t,\bP;H)$ consists of all $H$-valued
$\cF_t$-measurable random variables with finite second moments.

Let $D$ be a bounded domain in 
$\R^d$ with a $C^2$ boundary 
$\partial D$.  Let 
$G$ be measurable subset with positive measures of 
$D$. 

We denote by
$(\cdot, \cdot)$ the inner product in $L^2(D)$, and denote by
$\|\cdot\|$ the norm induced by $(\cdot,\cdot)$. We also use the
notations $(\cdot,\cdot)_G$ and $\|\cdot\|_G$ for the inner product
and the norm defined on $L^2(G)$, respectively. We denote by $|\cdot|$
the Lebesgue measure on $\R^d$.

Let $A$ be an unbounded linear operator on $L^2(D)$:
\[\mathcal{D}(A)=H^2(D)\cap H^1_0(D),\quad Av = \Delta v,\ \forall v\in
\mathcal{D}(A).\]

Consider the
following stochastic heat equation
\begin{equation}
  \label{eq:main}
  \begin{cases}
    dy = Ay\, dt + \chi_G u(t)\, dt + a(t)y\, dw(t),\quad t\in (0,T),\\
    y(0) = y_0,
  \end{cases}
\end{equation}
where $a\in L^\infty_\cF(0,T;\R)$.

 We say the system~\eqref{eq:main} is {\it approximately controllable} at time $T$, if for
  any initial data $y_0\in L^2(\Omega, \cF_0, \bP;L^2((D))$, and any final state
  $y_1\in L^2(\Omega, \cF_T, \bP;L^2(D))$ and any $\e>0$, there exists a control
  $u$ in the space $L_\cF^2(0,T;L^2(D))$ such that the solution $y$ of the
  system~\eqref{eq:main} with initial data $y_0$ and control $u$ satisfies
  \[
    \E \|y(T) - y_1\|^2 \le \e.
  \]

Without of generality, we simply choose $y_1=0$.

In the sequel, fix $\e>0$.

Our first result is to confirm the approximate controllability for system
\eqref{eq:main}. Moreover, we solve the following minimum norm control problem
\begin{equation}
  \label{eq:norm optimal}
  N_\e(G) = \inf\{ \E \|u\|^2_{L^2((0,T)\times D)} \mid \E \|y(T;G,u)\|^2 \le \e\},
\end{equation}
where $y(\cdot;G,u)$ is the solution of system~\eqref{eq:main}. In
the problem \eqref{eq:norm optimal}, we say $u$ is an {\it admissible
  control} if $u \in L^2_\cF(0,T;L^2(D))$ and $\E \|y(T;G,u)\|^2 \le \e$; we say
$u^\ast$ is a minimal norm control if $u^\ast$ is an admissible control such that $N_\e(G)$ is achieved. 

To present our result, let us introduce the following backward stochastic
  heat equation
  \begin{equation}
  \label{eq:dual}
  \begin{cases}
    dz = - Az\, dt -a(t) Z\, dt +Z\, dw(t), \quad t\in (0,T),\\
    z(T) = \eta.
  \end{cases}
\end{equation}
For each $\eta\in
L^2(\Omega,\cF_T,\bP;L^2(D))$, it is known (see, for example
\cite{hu1991adapted, du2010revisit}) that the equation \eqref{eq:dual} admits a
unique solution $(z,Z)$ in the space of
$(L^2_\cF(\Omega;C(0,T;L^2(D)))\cap L^2_\cF(0,T;H^1_0(D)))\times L^2_\cF(0,T;L^2(D))$.

We also define a functional on $L^2(\Omega; \cF_T, \bP;L^2(D))$:
\begin{equation}
  \label{eq:J_epsilon}
  \cJ_\varepsilon(\eta) = \frac{1}{2}\int_0^T \E \|z(t;\eta)\|_G^2\, dt + \varepsilon (\E \|\eta\|^2)^{1/2} + \E(y_0, z(0;\eta)).
\end{equation}

\begin{theorem}
  \label{thm:main}
  The system \eqref{eq:main} is approximately controllable at time $T$.
  Moreover, denote by $\eta^\ast$ the minimizer of the functional defined in
  \eqref{eq:J_epsilon}, 
  $u^\ast = z(\cdot;\eta^\ast)\restriction_G$ is a minimal norm control
  and
  \[
    \int_0^T \E \|u^\ast(t)\|_G^2\, dt \le C (\E \|y_0\|^2)^2,
  \]
  where $z(\cdot;\eta)$ is the solution of equation \eqref{eq:dual} with
  $z(T)=\eta^\ast$.
\end{theorem}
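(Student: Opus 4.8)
The plan is to treat the minimum-norm problem \eqref{eq:norm optimal} by the classical Fenchel--Rockafellar (penalization) duality, with $\cJ_\varepsilon$ as the dual functional. Two facts carry the whole argument. The first is the \emph{duality identity}: if $y(\cdot;G,u)$ solves \eqref{eq:main} and $(z,Z)$ solves \eqref{eq:dual} with $z(T)=\eta$, then \Ito's formula applied to $t\mapsto(y(t),z(t))$, using the self-adjointness of $A$ and the cancellation of the cross-variation term $a(y,Z)$ against the term $-a(y,Z)$ produced by $-aZ\,dt$ in \eqref{eq:dual}, gives after taking expectations
\[
  \E\big(y(T;G,u),\eta\big)-\E\big(y_0,z(0;\eta)\big)=\int_0^T\E\big(u(t),z(t;\eta)\big)_G\,dt .
\]
The second is the \emph{unique continuation property} for \eqref{eq:dual}: if $z(\cdot;\eta)\restriction_G=0$ in $L^2_\cF(0,T;L^2(G))$, then $\eta=0$. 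I expect this to be the only genuinely hard ingredient; it is discussed at the end.

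\textbf{A minimizer of $\cJ_\varepsilon$ exists and is unique.} The functional $\cJ_\varepsilon$ is convex and continuous on $L^2(\Omega,\cF_T,\bP;L^2(D))$, and by the unique continuation property the linear map $\eta\mapsto z(\cdot;\eta)\restriction_G$ is injective, so the quadratic part of $\cJ_\varepsilon$ is strictly convex and $\cJ_\varepsilon$ has at most one minimizer. For existence I would prove the normalized coercivity bound $\liminf_{(\E\|\eta\|^2)^{1/2}\to\infty}\cJ_\varepsilon(\eta)/(\E\|\eta\|^2)^{1/2}\ge\varepsilon$ by the standard dichotomy: given $\eta_n$ with $(\E\|\eta_n\|^2)^{1/2}\to\infty$, set $\widehat\eta_n=\eta_n/(\E\|\eta_n\|^2)^{1/2}$; if $\liminf_n\int_0^T\E\|z(t;\widehat\eta_n)\|_G^2\,dt>0$, the quadratic term forces $\cJ_\varepsilon(\eta_n)/(\E\|\eta_n\|^2)^{1/2}\to+\infty$; otherwise pass to a subsequence with $\widehat\eta_n\rightharpoonup\widehat\eta$, apply weak lower semicontinuity of the quadratic term to get $z(\cdot;\widehat\eta)\restriction_G=0$, hence $\widehat\eta=0$ by unique continuation, and then use weak continuity of the bounded linear map $\eta\mapsto z(0;\eta)$ (which takes values in the deterministic space $L^2(D)$, since $\cF_0$ is trivial) to get $\E(y_0,z(0;\widehat\eta_n))\to0$, so that $\liminf_n\cJ_\varepsilon(\eta_n)/(\E\|\eta_n\|^2)^{1/2}\ge\varepsilon>0$. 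Thus a unique minimizer $\eta^\ast$ exists.

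\textbf{Identification of $u^\ast$; approximate controllability; minimality.} Write the first-order optimality condition for $\cJ_\varepsilon$ at $\eta^\ast$ (an Euler--Lagrange equation if $\eta^\ast\neq0$, a subdifferential inclusion if $\eta^\ast=0$, since $(\E\|\cdot\|^2)^{1/2}$ fails to be differentiable at $0$). Testing it against an arbitrary $\zeta$ and comparing with the duality identity for the control $u^\ast:=z(\cdot;\eta^\ast)\restriction_G$ and terminal data $\zeta$ forces $y(T;G,u^\ast)=-\varepsilon\,\eta^\ast/(\E\|\eta^\ast\|^2)^{1/2}$ when $\eta^\ast\neq0$ (and forces $y(T;G,0)$ into the target ball when $\eta^\ast=0$); in either case $\E\|y(T;G,u^\ast)\|^2\le\varepsilon$, so $u^\ast$ is admissible. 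The approximate controllability of \eqref{eq:main} then follows: it is equivalent, by a Hahn--Banach argument, to the unique continuation property (and also follows directly by running the same construction with $\cJ_\varepsilon$ re-centered at a prescribed target). For minimality, Young's inequality together with the duality identity, the Cauchy--Schwarz inequality and the constraint $\E\|y(T;G,u)\|^2\le\varepsilon$ give $\tfrac12\,\E\|u\|^2_{L^2((0,T)\times D)}\ge-\cJ_\varepsilon(\eta)$ for every admissible $u$ and every $\eta$; testing the optimality condition against $\eta^\ast$ itself gives the scalar identity $\int_0^T\E\|z(t;\eta^\ast)\|_G^2\,dt+\varepsilon(\E\|\eta^\ast\|^2)^{1/2}+\E(y_0,z(0;\eta^\ast))=0$, which on substitution into $\cJ_\varepsilon$ yields $\cJ_\varepsilon(\eta^\ast)=-\tfrac12\int_0^T\E\|u^\ast(t)\|_G^2\,dt$. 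Taking $\eta=\eta^\ast$ in the previous inequality therefore gives $\E\|u\|^2_{L^2((0,T)\times D)}\ge\int_0^T\E\|u^\ast(t)\|_G^2\,dt$ for every admissible $u$; since $u^\ast$ is admissible, it is a minimal norm control and $N_\varepsilon(G)=\int_0^T\E\|u^\ast(t)\|_G^2\,dt$.

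\textbf{The a priori bound and the main obstacle.} From the scalar identity, $\int_0^T\E\|u^\ast(t)\|_G^2\,dt+\varepsilon(\E\|\eta^\ast\|^2)^{1/2}=-\E(y_0,z(0;\eta^\ast))\le(\E\|y_0\|^2)^{1/2}(\E\|z(0;\eta^\ast)\|^2)^{1/2}$, and the backward energy estimate $\E\|z(0;\eta)\|^2\le C\,\E\|\eta\|^2$ (a Grönwall argument on $t\mapsto\E\|z(t)\|^2$ that absorbs the $a(z,Z)$ cross term) lets one first bound $(\E\|\eta^\ast\|^2)^{1/2}$ by a constant multiple of $(\E\|y_0\|^2)^{1/2}$ and then close the loop to obtain the asserted bound on $\int_0^T\E\|u^\ast(t)\|_G^2\,dt$ (with a constant depending only on $T$, $\varepsilon$ and $a$). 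Everything above except the unique continuation property is soft analysis: the stochastic features --- the correction process $Z$, the additional \Ito\ term, and the well-posedness of the backward equation --- are handled by the cancellation noted in the strategy and by routine energy estimates. The genuine difficulty is establishing the unique continuation property for the backward stochastic heat equation over a \emph{merely measurable} set $G$ of positive measure; I would derive it from a global Carleman estimate for second-order stochastic parabolic operators, upgrading the observation region from an open subset to a positive-measure subset by a propagation-of-smallness argument in the spirit of the deterministic theory. That is where the analytic effort is concentrated.
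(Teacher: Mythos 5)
Your overall route---existence of a minimizer of $\cJ_\e$ via convexity, continuity and coercivity, the \Ito\ duality identity, the first-order optimality condition to identify $u^\ast=z(\cdot;\eta^\ast)\restriction_G$ and prove admissibility, and the Fenchel-type inequality $\tfrac12\E\|u\|^2\ge-\cJ_\e(\eta)$ for minimality---is essentially the paper's proof. (The paper packages minimality as $\E(y(T;\wt u),\eta^\ast)\ge-\e(\E\|\eta^\ast\|^2)^{1/2}=\E(y(T;u^\ast),\eta^\ast)$ followed by the duality identity and Cauchy--Schwarz, which is the same computation as your Young's-inequality version.) Your explicit treatment of the subdifferential at $\eta^\ast=0$ is more careful than the paper, which tacitly divides by $(\E\|\eta^\ast\|^2)^{1/2}$.

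There is, however, one genuine gap: the a priori bound. You propose to combine the scalar identity $\int_0^T\E\|u^\ast\|_G^2\,dt+\e(\E\|\eta^\ast\|^2)^{1/2}=-\E(y_0,z(0;\eta^\ast))$ with the forward energy estimate $\E\|z(0;\eta)\|^2\le C\,\E\|\eta\|^2$ to ``first bound $(\E\|\eta^\ast\|^2)^{1/2}$.'' This does not close: it yields $\e(\E\|\eta^\ast\|^2)^{1/2}\le C(\E\|y_0\|^2)^{1/2}(\E\|\eta^\ast\|^2)^{1/2}$, in which the quantity you are trying to control cancels from both sides, leaving only the vacuous statement $\e\le C(\E\|y_0\|^2)^{1/2}$ (or $\eta^\ast=0$). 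The estimate has to run in the opposite direction: one needs the quantitative observability inequality $\E\|z(0;\eta)\|^2\le C\int_0^T\E\|z(t;\eta)\|_G^2\,dt$ over the measurable set $G$ (this is \cite[Lemma 3.2]{yang2016observability}, the key external input of the paper). Substituting it into the scalar identity gives $\tfrac1C\E\|z(0;\eta^\ast)\|^2\le-\E(y_0,z(0;\eta^\ast))$, hence $(\E\|z(0;\eta^\ast)\|^2)^{1/2}\le C(\E\|y_0\|^2)^{1/2}$, and only then do the bounds on $(\E\|\eta^\ast\|^2)^{1/2}$ and on $\int_0^T\E\|u^\ast\|_G^2\,dt$ follow. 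Relatedly, you single out qualitative unique continuation as ``the only genuinely hard ingredient,'' but unique continuation alone suffices only for coercivity, admissibility and minimality; the norm estimate in the theorem genuinely requires the stronger quantitative observability inequality over measurable sets, which is what the paper imports rather than proves.
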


Controllability problems of deterministic partial differential
equations are extensive studied in literature; see the survey articles
\cite{zuazua2002controllability, zuazua2007controllability} and references
therein. Recently, there are also some results obtained for the stochastic
counterpart \cite{barbu2003carleman, fu2017controllability, liu2014controllability, lu2011some}, to name a few. In particular, authors in
\cite{lu2011some} pointed out a different observation from the deterministic
equations that null controllability does not necessarily imply approximate
controllability in the stochastic system, and showed the later by duality
argument and a Riesz-type representation theorem for general stochastic
processes \cite{lu2012representation}. Unlike the method employed in \cite{lu2011some}, we use a variational
technique and provide a
constructive proof of approximate controllability, and furthermore, our method
leads to the existence of the minimal norm control.

Thanks to the observability inequality developed in \cite{yang2016observability} (see \cite{phung2013observability, apraiz2014observability} for the deterministic version), our approximate controllability result is more robust and can allow the
control domain $G$ to be any measurable set with positive Lebesgue measure,
compared to works mentioned earlier with the internal control living on an open
set. This generalization facilitates the study of the optimal actuator location problem for
a wider class of equations. For example, authors in
\cite{guo2016optimal} investigate the optimal actuator location of the
minimum norm controls for deterministic heat equations in arbitrary dimensions,
while \cite{allaire2010long} studied the one
dimensional case and \cite{guo2015optimal} considered a special class of
controlled domains. For other actuator location problems, see
\cite{darivandi2013algorithm, privat2015optimal}, and related numerical research
\cite{muench2008optimal, munch2009optimal, tiba2011finite}.

The second part of our paper is devoted to the optimal actuator location of the
minimum norm control problem for internal approximate controllable stochastic
heat equations.

Given $\alpha\in (0,1)$, let
\begin{equation}
  \label{eq:W}
  \cW = \{G\subseteq D \mid G~\text{is Lebesgue measurable with}~
  |G| = \alpha|D|\},
\end{equation}
where  $|\cdot|$ is the Lebesgue measure on $\R^d$.

A classical optimal actuator location of the minimal norm control problem is to seek a set $G^\ast\in \cW$ such that
\begin{equation}
  \label{eq:optimal location}
  N_\e(G^\ast) = \inf_{G\in \cW} N_\e(G).
\end{equation}
If such a $G^\ast$ exists, we say that $G^\ast$ is an {\it optimal actuator location} of the minimum norm controls. Any minimum norm control $u^\ast$ satisfying
\[
\E \|u^\ast\chi_{G^\ast}\|^2_{L^2((0,T)\times D)} = N_\e(G^\ast),
\]
is called a {\it minimum norm control} with respect to the optimal actuator location $G^\ast$.

The existence of the optimal actuator location $G^\ast$ is generally not
guaranteed because of the absence of the compactness of $\cW$. For this reason,
we extend the feasible set $\cW$ to a relaxed set $\cB$ (see \eqref{eq:B}), and
solve a relaxed optimal actuator location problem. We prove the existence of the relaxed optimal actuator location and characterize the solution of the relaxed problem via a Nash equilibrium; see Section 3 for problem formulation and details.

The key contribution of this paper is that we are able to recover the solution from a relaxed
problem to a solution of the classical optimal actuator location problem.

\begin{theorem}
  \label{thm:main2}
  There exists a solution to problem \eqref{eq:optimal location}.
\end{theorem}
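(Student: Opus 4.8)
The plan is to relax the admissible class $\cW$ to a weak-$*$ compact convex set, solve the resulting relaxed minimum-norm optimal-location problem, describe its solution through the Nash equilibrium of Section~3, and then show that this relaxed optimum is in fact a characteristic function, hence an element of $\cW$.

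First I set up the relaxed problem. Put $\cB=\{\beta\in L^\infty(D):0\le\beta\le1~\text{a.e. in }D,~\int_D\beta\,dx=\alpha|D|\}$; this is convex and compact for the weak-$*$ topology of $L^\infty(D)=(L^1(D))^\ast$. For $\beta\in\cB$ let $N_\e(\beta)$ denote the value \eqref{eq:norm optimal} with $\chi_G$ replaced by $\beta$ in \eqref{eq:main}. Running the Fenchel duality behind Theorem~\ref{thm:main} with $\chi_G$ replaced by $\beta$ yields $N_\e(\beta)=-2\min_\eta\cJ_\e^\beta(\eta)$, where $\cJ_\e^\beta$ is \eqref{eq:J_epsilon} with $\|z(t;\eta)\|_G^2$ replaced by $\int_D\beta(x)|z(t,x;\eta)|^2\,dx$. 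Writing $\psi(x;\eta):=\int_0^T\E|z(t,x;\eta)|^2\,dt\ge0$, we have $\cJ_\e^\beta(\eta)=\tfrac12\int_D\beta\,\psi(\cdot;\eta)\,dx+\e(\E\|\eta\|^2)^{1/2}+\E(y_0,z(0;\eta))$, which is affine in $\beta$ for fixed $\eta$, and convex, weakly lower semicontinuous and coercive in $\eta$ for fixed $\beta$ — coercivity because $\{\beta>0\}$ has positive measure and the observability inequality of \cite{yang2016observability} bounds $\E\|\eta\|^2$ from below by $\int_0^T\E\|z(t;\eta)\|_{G_0}^2\,dt$ for any positive-measure $G_0\subseteq\{\beta>0\}$. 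Hence $\inf_{\beta\in\cB}N_\e(\beta)=-2\sup_{\beta\in\cB}\inf_\eta\cJ_\e^\beta(\eta)$, Sion's minimax theorem applies on $\cB\times L^2(\Omega,\cF_T,\bP;L^2(D))$, and the outer sup and inner inf are attained; this produces a saddle point (Nash equilibrium) $(\beta^\ast,\eta^\ast)$ with
\[
\cJ_\e^\beta(\eta^\ast)\ \le\ \cJ_\e^{\beta^\ast}(\eta^\ast)\ \le\ \cJ_\e^{\beta^\ast}(\eta)\qquad\text{for all }\beta\in\cB,\ \eta\in L^2(\Omega,\cF_T,\bP;L^2(D)).
\]
From the left inequality $\beta^\ast$ maximizes $\beta\mapsto\int_D\beta\,\psi(\cdot;\eta^\ast)\,dx$ over $\cB$, so the bathtub principle gives a threshold $c^\ast\ge0$ with $\{\psi(\cdot;\eta^\ast)>c^\ast\}\subseteq\{\beta^\ast=1\}$, $\{\psi(\cdot;\eta^\ast)<c^\ast\}\subseteq\{\beta^\ast=0\}$ and $|\{\psi(\cdot;\eta^\ast)>c^\ast\}|\le\alpha|D|\le|\{\psi(\cdot;\eta^\ast)\ge c^\ast\}|$; from the right inequality $\eta^\ast$ minimizes $\cJ_\e^{\beta^\ast}$, so $N_\e(\beta^\ast)=-2\cJ_\e^{\beta^\ast}(\eta^\ast)=\inf_{\beta\in\cB}N_\e(\beta)$, i.e. $\beta^\ast$ solves the relaxed problem.

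Next I recover a solution of \eqref{eq:optimal location}. If $\E\|y_0\|^2\le\e$ then $u\equiv0$ is admissible for every $G$, so $N_\e\equiv0$ and any $G^\ast\in\cW$ works; assume henceforth $\E\|y_0\|^2>\e$, so that $\eta^\ast\neq0$. The crucial claim is that the threshold level set $N:=\{x\in D:\psi(x;\eta^\ast)=c^\ast\}$ is Lebesgue-null. Granting it, $|\{\psi(\cdot;\eta^\ast)>c^\ast\}|=\alpha|D|$ and $\beta^\ast=\chi_{G^\ast}$ a.e., where $G^\ast:=\{\psi(\cdot;\eta^\ast)>c^\ast\}\in\cW$. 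Then, since $\cW\subseteq\cB$ under $G\mapsto\chi_G$,
\[
N_\e(G^\ast)=N_\e(\beta^\ast)=\inf_{\beta\in\cB}N_\e(\beta)\ \le\ \inf_{G\in\cW}N_\e(G)\ \le\ N_\e(G^\ast),
\]
so $N_\e(G^\ast)=\inf_{G\in\cW}N_\e(G)$ and $G^\ast$ is an optimal actuator location.

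It remains to justify $|N|=0$, and this is where I expect the genuine difficulty to lie; it is precisely the step of converting a relaxed optimum into a classical one. If $c^\ast=0$ and $|N|>0$, then $z(\cdot;\eta^\ast)\equiv0$ on $(0,T)\times N$ $\bP$-a.s., and the observability inequality of \cite{yang2016observability}, valid for the positive-measure set $N$, forces $\eta^\ast=0$, a contradiction. If $c^\ast>0$ and $|N|>0$, then $x\mapsto\psi(x;\eta^\ast)$ is a.e. equal to the positive constant $c^\ast$ on a set of positive measure; I would exclude this via a unique continuation property for the backward stochastic heat equation \eqref{eq:dual}: in the deterministic analog \cite{guo2016optimal} one invokes interior real-analyticity of parabolic solutions to conclude that $\psi(\cdot;\eta^\ast)$ cannot be constant on a positive-measure subset of $D$ unless it is constant on all of $D$, whereupon the Dirichlet condition $z(t,\cdot)\in H^1_0(D)$ forces $\psi(\cdot;\eta^\ast)\equiv0$ and hence $\eta^\ast=0$ — and in the stochastic setting one runs the analogous argument using the Carleman estimates underlying \cite{yang2016observability}. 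The remaining ingredients (weak-$*$ compactness of $\cB$, the dual representation of $N_\e$, Sion's theorem, the bathtub principle, and non-atomicity of Lebesgue measure) are routine once Theorem~\ref{thm:main} is in hand.
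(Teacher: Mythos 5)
Your setup---relaxing $\cW$ to a weak-$\ast$ compact convex class of densities, the duality $N_\e(\beta)=-2\inf_\eta\cJ_\e(\eta;\beta)$, the minimax/saddle-point argument, and the bathtub-type characterization of the optimal density via a threshold $c^\ast$---matches Sections 3 and 4 of the paper. The divergence, and the gap, is in the final step. You reduce the whole theorem to the claim that the level set $N=\{x\in D:\psi(x;\eta^\ast)=c^\ast\}$ is Lebesgue-null, and for $c^\ast>0$ you propose to prove this by a spatial-analyticity/unique-continuation argument, transplanted from the deterministic case ``using the Carleman estimates underlying'' \cite{yang2016observability}. This step is unsubstantiated and cannot be extracted from the cited results: the observability inequality bounds $\E\|z(0)\|^2$ by $\int_0^T\E\|z\|^2_{G}\,dt$ over positive-measure sets, but says nothing about whether $x\mapsto\int_0^T\E|z(t,x;\eta^\ast)|^2\,dt$ can equal a positive constant on a set of positive measure. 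The interior real-analyticity that makes this work for the deterministic heat equation in \cite{guo2016optimal} is not available for the backward stochastic equation \eqref{eq:dual}: $z(t,\cdot)$ only lives in $H^1_0(D)$, the coefficient $a$ is merely $L^\infty_\cF$, and no strong unique continuation result for \eqref{eq:dual} is proved or cited. Since your recovery of $G^\ast\in\cW$ rests entirely on $|N|=0$, the proof is incomplete at exactly the point you yourself flag as the crux.

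The paper avoids this issue entirely, and this is the idea you are missing: one never needs the level set to be null. The objective $\theta\mapsto\int_D\theta H\,dx$ sees the level set $\{H=c(\alpha)\}$ only through the total mass $\int_{\{H=c(\alpha)\}}\theta\,dx$, because the integrand there is the constant $c(\alpha)$; so that mass can be redistributed onto an indicator without changing the value. Concretely, \eqref{eq:0926-1}--\eqref{eq:0926-2} give $|\{H>c(\alpha)\}|\le\alpha|D|\le|\{H\ge c(\alpha)\}|$, hence by non-atomicity of Lebesgue measure there is a measurable $E\subseteq\{H=c(\alpha)\}$ with $|E|=\alpha|D|-|\{H>c(\alpha)\}|$, and $\wt{\theta}=\chi_{\{H>c(\alpha)\}\cup E}$ is an admissible maximizer of $\int_D\theta H\,dx$ that is already a characteristic function of a set of measure $\alpha|D|$; the contradiction argument with the sets $A$ and $B$ in Section 4 pins the optimizer down off the level set and identifies $G^\ast=\{H>c(\alpha)\}\cup E$ as a classical optimal actuator location. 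To complete your proof you should either adopt this purely measure-theoretic selection of $E$ or actually establish a strong unique continuation property for \eqref{eq:dual}, which is a substantial and separate undertaking. Two smaller defects: the degenerate-case criterion ``$\E\|y_0\|^2\le\e$ implies $u\equiv0$ is admissible'' is false in general, since $\E\|y(T;0)\|^2$ may exceed $\E\|y_0\|^2$ when $\|a\|_{L^\infty}$ is large relative to the first Dirichlet eigenvalue; and in the case $c^\ast=0$ your deduction of $\eta^\ast=0$ from $z(0;\eta^\ast)=0$ uses a backward uniqueness statement for \eqref{eq:dual} that you do not justify.
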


To the best of our knowledge, this is the first result on the existence of
optimal actuator location problem in a stochastic system, and it can easily be
applied to the null controllability of stochastic heat equations discussed in
\cite{yang2016observability}. In a related work \cite{privat2015optimal},
authors studied a deterministic heat equation with random initial data, and
minimized the constant in the observability inequality, instead of the norm of
admissible controls.

The rest of the paper is organized as follows. In Section
\ref{sec:2}, we show the existence of the minimizer of $J_\e$ and construct the minimum
norm control to prove Theorem \ref{thm:main}. In Section
\ref{sec:3}, we solve a relaxed optimal actuator location
problem and provide a sufficient and necessary condition for the relaxed optimal
solution via Nash equilibrium. In Section \ref{sec:4}, we show the existence of the a
classical optimal actuator location problem.

\section{Characterization of the minimum norm control}
\label{sec:2}
In this section, we study the approximate controllability of the system
\eqref{eq:main}. We provide a constructive proof of the existence of an
admissible control, and then show that this control is indeed a minimum norm
control that solves problem \eqref{eq:norm optimal}.

First, we prove the existence of the minimizer of $\cJ_\e$ defined in \eqref{eq:J_epsilon}.
\begin{lemma}
  \label{lem:J_min}
  Suppose $y_0\in L^2(\Omega,\cF_0, \bP; L^2(D))$. Then there exists $\eta^\ast\in
  L^2(\Omega, \cF_T, \bP;L^2(D))$ such that
  \[
    \cJ_\varepsilon(\eta^\ast) = \min_{\eta\in L^2(\Omega,\cF_T,\bP;L^2(D))} \cJ_\varepsilon(\eta).
  \]
\end{lemma}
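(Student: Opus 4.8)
The plan is to apply the direct method of the calculus of variations on the Hilbert space $\cH:=L^2(\Omega,\cF_T,\bP;L^2(D))$: I will show that $\cJ_\varepsilon$ is convex, strongly continuous, and coercive, and then extract a minimizer from a minimizing sequence by weak compactness.

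Convexity and continuity are the routine part. Because \eqref{eq:dual} is linear in its terminal datum, the solution map $\eta\mapsto(z(\cdot;\eta),Z(\cdot;\eta))$ is linear, and by the standard energy estimate for \eqref{eq:dual} it is bounded from $\cH$ into $\big(L^2_\cF(\Omega;C(0,T;L^2(D)))\cap L^2_\cF(0,T;H^1_0(D))\big)\times L^2_\cF(0,T;L^2(D))$; in particular $\eta\mapsto z(0;\eta)$ is bounded from $\cH$ to $\cH$ and $\eta\mapsto z(\cdot;\eta)\restriction_G$ is bounded from $\cH$ to $L^2_\cF(0,T;L^2(G))$. Hence the first term of $\cJ_\varepsilon$ is a continuous nonnegative quadratic form, the second is a continuous seminorm, and the third is a continuous linear functional; all are convex, so $\cJ_\varepsilon$ is convex and strongly continuous, and therefore weakly lower semicontinuous on $\cH$.

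The substantive step is coercivity, and here I would use the observability inequality of \cite{yang2016observability} for the backward stochastic heat equation, in the form: there is $C>0$ with
\[
\E\|z(0;\eta)\|^2 \le C\int_0^T \E\|z(t;\eta)\|_G^2\,dt \qquad\text{for all }\eta\in\cH.
\]
Setting $a(\eta):=\big(\int_0^T\E\|z(t;\eta)\|_G^2\,dt\big)^{1/2}$ and bounding the last term of $\cJ_\varepsilon$ by Cauchy--Schwarz together with this estimate gives $\E(y_0,z(0;\eta))\ge -C^{1/2}(\E\|y_0\|^2)^{1/2}\,a(\eta)$, whence
\[
\cJ_\varepsilon(\eta)\ \ge\ \tfrac12 a(\eta)^2 - C^{1/2}(\E\|y_0\|^2)^{1/2}\,a(\eta) + \varepsilon(\E\|\eta\|^2)^{1/2}\ \ge\ -\tfrac{C}{2}\,\E\|y_0\|^2 + \varepsilon(\E\|\eta\|^2)^{1/2},
\]
the last inequality by minimizing the quadratic in $a(\eta)\ge0$. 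This one bound shows at once that $\cJ_\varepsilon$ is bounded below and that $\cJ_\varepsilon(\eta)\to+\infty$ as $(\E\|\eta\|^2)^{1/2}\to\infty$.

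To finish, pick a minimizing sequence $\{\eta_n\}\subset\cH$; the displayed lower bound makes it bounded, so a subsequence converges weakly in $\cH$ to some $\eta^\ast$, and weak lower semicontinuity yields $\cJ_\varepsilon(\eta^\ast)\le\liminf_n\cJ_\varepsilon(\eta_n)=\inf_{\cH}\cJ_\varepsilon$, so $\eta^\ast$ is the desired minimizer. The main obstacle is the coercivity step: the convex-analysis ingredients are soft, but the lower bound genuinely relies on the observability inequality of \cite{yang2016observability}, which is the only place the structure of the stochastic heat equation and the mere measurability of $G$ are used. If one preferred not to quote the quantitative estimate, the qualitative unique continuation property ``$z(\cdot;\eta)\restriction_G=0$ a.e. $\Rightarrow\eta=0$'' would still suffice, with a short weak-limit argument replacing the completion of the square.
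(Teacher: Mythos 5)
Your proof is correct, and it follows the same overall skeleton as the paper's (convexity, continuity, coercivity, then existence of a minimizer for a convex, continuous, coercive functional on a Hilbert space), but the coercivity step --- the only substantive one --- is executed differently. The paper normalizes a sequence $\wt{\eta}_n=\eta_n/(\E\|\eta_n\|^2)^{1/2}$ and splits into two cases according to whether $\liminf_n\int_0^T\E\|z(t;\wt{\eta}_n)\|_G^2\,dt$ is positive or zero; in the second case it extracts a weakly convergent subsequence, passes to the limit by lower semicontinuity, and invokes the observability inequality qualitatively to conclude $z(0;\wt{\eta})=0$ and hence that the ratio $\cJ_\e(\eta_n)/(\E\|\eta_n\|^2)^{1/2}$ is asymptotically at least $\e$. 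You instead use the observability inequality quantitatively, once, to dominate the linear term $\E(y_0,z(0;\eta))$ by the quadratic term, and complete the square to get the uniform lower bound $\cJ_\e(\eta)\ge -\tfrac{C}{2}\E\|y_0\|^2+\e(\E\|\eta\|^2)^{1/2}$. This is legitimate --- the quantitative estimate $\E\|z(0;\eta)\|^2\le C\int_0^T\E\|z(t;\eta)\|_G^2\,dt$ is exactly what the paper itself cites from \cite{yang2016observability} later, in the boundedness part of the proof of Theorem \ref{thm:main} and in Lemma \ref{lem:3.6} --- and it buys you a cleaner argument: no case split, no subsequence extraction, and an explicit lower bound on $\inf\cJ_\e$ for free (which is in the spirit of the estimate $N_\e(G)\le C(\E\|y_0\|^2)^2$ the paper needs anyway). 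The paper's case analysis, as you note at the end, has the mild advantage that only the qualitative unique-continuation consequence of observability is needed in the degenerate case; but since the quantitative inequality is available and used elsewhere, nothing is lost by your route.
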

\begin{proof}
  We first show that $\cJ_\varepsilon(\cdot)$ is convex. To this end, fix
  $\theta\in (0,1)$ and let $\eta_1,\eta_2 \in L^2(\Omega,\cF_T, \bP; L^2(D))$.
  By linearity, we have
  \[
    z(\cdot; \theta \eta_1 + (1-\theta)\eta_2) = \theta z(\cdot;\eta_1) + (1-\theta)z(\cdot;\eta_2).
  \]
  Thus, 
  \begin{align*}
   & \cJ_\varepsilon(\theta\eta_1 + (1-\theta)\eta_2)\\
   =\, &  \frac{1}{2}\int_0^T \E \|\theta z(\cdot;\eta_1) + (1-\theta)z(\cdot;\eta_2)\|_G^2\, dt+ \varepsilon(\E \|\theta\eta_1+(1-\theta)\eta_2\|^2)^{1/2} \\
    & \qquad +\E(y_0, \theta z(0;\eta_1) + (1-\theta)z(0;\eta_2))\\
    \le\,& \frac{\theta}{2}\int_0^T \E\|z(\cdot;\eta_1)\|^2_G\, dt + \frac{1-\theta}{2}\int_0^T \E\|z(\cdot;\eta_2)\|^2_G\, dt\\
    & \quad + \varepsilon\theta(\E\|\eta_1\|^2)^{1/2} + \varepsilon(1-\theta)(\E\|\eta_2\|^2)^{1/2}+ \E(y_0, \theta z(0;\eta_1) + (1-\theta)z(0;\eta_2)).
  \end{align*}
  On the other hand,
  \begin{align*}
    & \theta \cJ_\varepsilon(\eta_1) + (1-\theta)\cJ_\varepsilon(\eta_2)\\
    =\ & \theta \left[\frac{1}{2} \int_0^T\E \|z(\cdot;\eta_1)\|_G^2\, dt + \varepsilon\E \|\eta_1\| + \E (y_0, z(0;\eta_1))\right]\\
    & \qquad + (1-\theta)\left[\frac{1}{2} \int_0^T\E \|z(\cdot;\eta_2)\|_G^2\, dt + \varepsilon\E \|\eta_2\| + \E (y_0, z(0;\eta_2))\right]\\
    =\ & \frac{\theta}{2}\int_0^T \E \|z(\cdot;\eta_1)\|_G^2\, dt + \frac{1-\theta}{2}\int_0^T \E\|z(\cdot;\eta_2)\|^2_G\, dt\\
    & \qquad + \varepsilon\theta(\E\|\eta_1\|^2)^{1/2} + \varepsilon(1-\theta)(\E\|\eta_2\|^2)^{1/2} + \E(y_0, \theta z(0;\eta_1) + (1-\theta)z(0;\eta_2)).
  \end{align*}
  Therefore, we obtain
  \begin{align*}
     \cJ_\varepsilon(\theta\eta_1 + (1-\theta)\eta_2)\le   \theta \cJ_\varepsilon(\eta_1) + (1-\theta)\cJ_\varepsilon(\eta_2),
  \end{align*}
  and so $\cJ_\varepsilon(\cdot)$ is convex.

  Next, we prove that the functional $\cJ_\varepsilon(\cdot)$ is continuous. In fact, assume $\eta, \varphi\in L^2(\Omega, \cF_T, \bP; L^2(D))$,
  and $h\in \R$, then
  \begin{align*}
    & \cJ_\e(\eta + h \varphi) - \cJ_\e(\eta)\\
    =\, & \frac{1}{2} \int_0^T \E \|z(t;\eta+h \varphi)\|^2_G\, dt + \e (\E \|\eta+h\varphi\|^2)^{1/2} + \E(y_0, z(0;\eta+h\varphi))\\
    &\quad -\frac{1}{2} \int_0^T \E \|z(t;\eta)\|^2_G\, dt - \e (\E \|\eta\|^2)^{1/2} - \E(y_0, z(0;\eta))\\
    =\, & h \int_0^T\E (z(t;\eta), z(t;\varphi))_G\, dt + \frac{1}{2} h^2 \int_0^T \E \|z(t;\varphi)\|^2_G\, dt\\
    &\quad + \e [(\E \|\eta+h\varphi\|^2)^{1/2}-(E \|\eta\|^2)^{1/2}] + h\E(y_0, z(0;\varphi))\to 0,
  \end{align*}
  as $h\to 0$, since $|\E \|\eta+h\varphi\|^2)^{1/2}-(E \|\eta\|^2)^{1/2}|\le |h|(\E\|\eta\|^2)^{1/2}$.

  To prove the coercivity, let $\{\eta_n\}\subseteq L^2(\Omega,
  \cF_T,\bP;L^2(D))$ such that $\E\|\eta_n\|\to \infty$ as $n\to\infty$, and let
\[
  \wt{\eta}_n = \eta_n/ (\E \|\eta_n\|^2)^{1/2},
\]
so that $(\E \|\wt{\eta}_n\|^2)^{1/2} = 1$. Then
\begin{align*}
  \frac{\cJ_{\e}(\eta_n)
  }{(\E \|\eta_n\|^2)^{1/2}} = \frac{1}{2}(\E \|\eta_n\|^2)^{1/2} \int_0^T \E \|z(t;\wt{\eta}_n)\|^2_G\, dt + \e + \E(y_0, z(0;\wt{\eta}_n)).
\end{align*}
Note that
\[
  |\E(y_0, z(0;\wt{\eta}_n))| \le (\E \|y_0\|^2)^{1/2} (\E
  \|z(0,\wt{\eta}_n)\|^2)^{1/2} \le C(\E \|y_0\|^2)^{1/2}. 
\]

If $\liminf_{n\to\infty}\int_0^T \E \|z(t;\wt{\eta}_n)\|^2_G\, dt>0$, we have
\begin{equation}
  \label{eq:coercivity}
  \cJ_{\e}(\eta_n)
   \to \infty\quad\text{as}\quad (\E\|\eta_n\|^2)^{1/2}\to\infty, 
\end{equation}
which implies the coercivity of $\cJ_\e$.

If $\liminf_{n\to\infty}\int_0^T \E \|z(t;\wt{\eta}_n)\|^2_G\, dt=0$, then it
follows from the observability inequality in \cite[Theorem
1.1]{yang2016observability} that $\wt{\eta}_n$ is bounded in $L^2(\Omega, \cF_T,
\bP; L^2(D))$. Thus, we can extract a subsequence $\{\wt{\eta}_{n_j}\}\subseteq
\{\wt{\eta}_n\}$ such that $\wt{\eta}_{n_j}\rightharpoonup \wt{\eta}$ weakly in
$L^2(D)$ and $z(\cdot; \wt{\eta}_{n_j})\rightharpoonup z(\cdot;\wt{\eta})$ weakly in
$L^2(0,T; H^1_0(D))\cap H^1(0,T;H^{-1}(D)), ~\bP$-a.s. Moreover, by lower
semi-continuity we obtain
\[
  \int_0^T \E \|z(t;\wt{\eta})\|_G^2\, dt \le \liminf_{j\to\infty}\int_0^T \E
  \|z(t; \wt{\eta}_{n_j})\|_G^2\, dt = 0.
\]
Then by the observability inequality again, we get $z(0;\wt{\eta})=0$, and thus
\begin{align*}
  \liminf_{n\to\infty}\frac{\cJ_{\e}(\eta_n)
  }{(\E \|\eta_n\|^2)^{1/2}}
  & \ge \liminf_{j\to\infty}\frac{\cJ_{\e}(\eta_{n_j})
    }{(\E \|\eta_{n_j}\|^2)^{1/2}}\\
  & \ge \liminf_{j\to \infty} [\e + \E(y_0, z(0;\wt{\eta}_{n_j}))]\\
  & = \e + \E(y_0, z(0,\wt{\eta})) = \e,
\end{align*}
which implies~\eqref{eq:coercivity}, and so $\cJ_\e$ is coercive.

To sum up, we showed that $\cJ_\e$ is convex, continuous, and coercive, and thus
the minimizer of $\cJ_\e$ exists..

\end{proof}

Now we are ready to prove Theorem \ref{thm:main}, and characterize the minimum
norm control via the minimizer $\eta^\ast$ of $\cJ_\e$ obtained in Lemma \ref{lem:J_min}.
\begin{proof}[Proof of Theorem \ref{thm:main}]
  Since $\cJ_\e$ attains its minimum value at $\eta^\ast$, for any $\eta\in
  L^2(\Omega, \cF_T, \bP;L^2(D))$ and $h\in \R$ we have
  \[
    \cJ_\e(\eta^\ast) \le \cJ_\e(\eta^\ast+h\eta).
  \]
  Thus,
  \begin{align}
    \label{eq:J-J}
    & \cJ_\e(\eta^\ast + h \eta) - \cJ_\e(\eta^\ast)\\
    =\, &  \frac{1}{2} \int_0^T \E \|z(t;\eta^\ast + h \eta)\|_G^2\, dt + \e (\E \|\eta^\ast + h \eta\|^2)^{1/2} + \E (y_0, z(0;\eta^\ast + h \eta))\notag\\
     & \quad - \frac{1}{2} \int_0^T \E \|z(t;\eta^\ast)\|_G^2\, dt - \e (\E \|\eta^\ast\|^2)^{1/2} - \E (y_0, z(0;\eta^\ast))\notag\\
   =\, &  \frac{h^2}{2} \int_0^T \E \|z(t;\eta) \|_G^2\, dt + h \int_0^T \E (z(t;\eta^\ast), z(t;\eta))_G\, dt\notag\\
    & \quad + \e [(\E \|\eta^\ast + h \eta\|^2)^{1/2}-(\E \|\eta^\ast\|^2)^{1/2}] +h \E(y_0, z(0;\eta))  \ge 0.
  \end{align}
  Also note that
  \[
    |(\E \|\eta^\ast + h \eta\|^2)^{1/2}-(\E \|\eta^\ast\|^2)^{1/2}| \le |h| (\E \|\eta\|^2)^{1/2},
  \]
  then we obtain
  \begin{multline*}
    \frac{h^2}{2} \int_0^T \E \|z(t;\eta) \|_G^2\, dt + h \int_0^T \E (z(t;\eta^\ast), z(t;\eta))_G\, dt
    + \e |h| (\E \|\eta\|^2)^{1/2}\\ +h \E(y_0, z(0;\eta)) \ge 0.
  \end{multline*}
  If $h>0$, then dividing $h$ and sending $h\to 0+$ yield
  \[
\int_0^T \E (z(t;\eta^\ast), z(t;\eta))_G\, dt
    + \e  (\E \|\eta\|^2)^{1/2} +\E(y_0, z(0;\eta)) \ge 0.
  \]
  If $h<0$, then diving $h$ and sending $h\to 0-$ yield
\[
  \int_0^T \E (z(t;\eta^\ast), z(t;\eta))_G\, dt
    - \e  (\E \|\eta\|^2)^{1/2} +\E(y_0, z(0;\eta)) \le 0.
\]
Consequently,
\begin{equation}
  \label{eq:2.145}
  \left| \int_0^T \E (z(t;\eta^\ast), z(t;\eta))_G\, dt
     +\E(y_0, z(0;\eta)) \right| \le \e  (\E \|\eta\|^2)^{1/2}.
\end{equation}

Now applying \Ito's formula to $(y, z)$ yields 
\begin{align*}
  d(y, z)
  & = (y, dz) + (dy, z) + a(y, Z)\, dt\\
  & = -(y, Az)\, dt - a(y, Z)\, dt + (y, Z)\, dw(t)\\
  & \quad +(Ay, z)\, dt + (u, z)_G \, dt + a(y, z)\, dw(t) + a(y, Z)\, dt\\
  & = (u, z)_G\, dt + (y, Z)\, dw(t) + a(y, z)\, dw(t).
\end{align*}
After we rewrite it in the integral form and take the expectation on both sides,
we get
\begin{equation}
  \label{eq:ito}
    \E(y_0, z(0;\eta)) = \E(y(T;u), \eta) - \int_0^T \E(z(t;\eta), u).
\end{equation}
Plugging~(\ref{eq:ito}) into~(\ref{eq:2.145}) and letting $u= u^\ast=
z(\cdot;\eta^\ast)\restriction_G$, we obtain
\[
  |\E(y(T;u^\ast), \eta)| \le \e (\E\|\eta\|^2)^{1/2}.
\]
Since $\eta$ is an arbitrary element in $L^2(\Omega, \cF_T, \bP;L^2(D))$, we
conclude that
\[
  (\E \|y(T;u^\ast)\|^2)^{1/2} \le \e.
\]
Therefore, $u^\ast = z(\cdot;\eta^\ast)\restriction_G$ is an admissible control.

Next, we show that $u^\ast$ is indeed the minimum norm control. To this end, let's go
back to the inequality~(\ref{eq:J-J}). Note that
\begin{align*}
  & \lim_{h\to 0} \frac{(\E\|\eta^\ast + h\eta\|^2)^{1/2} - (\E \|\eta^\ast\|^2)^{1/2}}{h}\\
  =\, & \lim_{h\to 0} \frac{\E \|\eta^\ast + h\eta\|^2 - \E \|\eta^\ast\|^2}{h[(\E\|\eta^\ast + h\eta\|^2)^{1/2} + (\E \|\eta^\ast\|^2)^{1/2})]}\\
  =\, & \frac{1}{2(\E \|\eta^\ast\|^2)^{1/2}}\lim_{h\to 0} \frac{\E \|\eta^\ast + h\eta\|^2 - \E \|\eta^\ast\|^2}{h}\\
  =\, & \frac{1}{2(\E \|\eta^\ast\|^2)^{1/2}}\lim_{h\to 0} \frac{2h \E(\eta^\ast, \eta) + h^2 \E\|\eta\|^2}{h}\\
  =\, & \frac{\E(\eta^\ast, \eta)}{(\E\|\eta^\ast\|^2)^{1/2}}.
\end{align*}
If $h>0$, then dividing $h$ and sending $h\to 0+$ in \eqref{eq:J-J} yields
\[
  \int_0^T \E(z(t;\eta^\ast), z(t;\eta))_G\, dt + \e \cdot \frac{\E(\eta^\ast,
    \eta)}{(\E\|\eta^\ast\|^2)^{1/2}} + \E(y_0, z(0;\eta)) \ge 0.
\]
If $h<0$, then dividing $h$ and sending $h\to 0-$ in \eqref{eq:J-J} yields
\[
   \int_0^T \E(z(t;\eta^\ast), z(t;\eta))_G\, dt + \e \cdot \frac{\E(\eta^\ast,
    \eta)}{(\E\|\eta^\ast\|^2)^{1/2}} + \E(y_0, z(0;\eta)) \le 0.
\]
To sum up, we have the following Euler-Lagrange equation
\begin{equation}
  \label{eq:9}
    \int_0^T \E(z(t;\eta^\ast), z(t;\eta))_G\, dt + \e \cdot \frac{\E(\eta^\ast,
    \eta)}{(\E\|\eta^\ast\|^2)^{1/2}} + \E(y_0, z(0;\eta)) =0.
\end{equation}
Plugging~(\ref{eq:ito}) into~(\ref{eq:9}) and using $u^\ast =
z(\cdot;\eta^\ast)\restriction_G$, we get
\[
  \E(y(T;u^\ast), \eta) + \e \cdot \frac{\E(\eta^\ast,
    \eta)}{(\E\|\eta^\ast\|^2)^{1/2}} =0.
\]
If particular, by choosing $\eta = \eta^\ast$,
\begin{equation}
  \label{eq:9-1}
  \E(y(T;u^\ast), \eta^\ast) = -\e (\E\|\eta^\ast\|^2)^{1/2}.
\end{equation}

Suppose now there is another admissible control $\wt{u}$ such that
\[
  (\E\|y(T;\eta^\ast)\|^2)^{1/2} \le \e.
\]
Then we have by \eqref{eq:9-1}
\begin{align*}
  \E(y(T;\wt{u}), \eta^\ast) & \ge - (\E\|y(T;\wt{u})\|^2)^{1/2} (\E\|\eta^\ast\|^2)^{1/2}\\
  & \ge -\e (\E\|\eta^\ast\|^2)^{1/2} = \E(y(T;u^\ast), \eta^\ast).
\end{align*}
Using~(\ref{eq:ito}) again, we arrive at
\[
  \int_0^T \E(z(t;\eta^\ast), \wt{u})_G\, dt \ge \int_0^T \E (z(t;\eta^\ast), u^\ast)_G\, dt
\]
Since $u^\ast = z(\cdot;\eta^\ast)\restriction_G$, we obtain
\[
  \int_0^T \E \|u^\ast
  (t)\|_G^2\, dt \le \int_0^T \E(u^\ast(t), \wt{u}(t))_G\, dt,
\]
and by Cauchy-Schwarz inequality,
\[
  \int_0^T \E \|u^\ast
  (t)\|_G^2\, dt \le \int_0^T \E \|\wt{u}
  (t)\|_G^2\, dt,
\]
which implies the optimality of $u^\ast$.

To show the boundedness for $u^\ast$, let's replace $\eta$ in \eqref{eq:9}, and
then
\begin{equation*}
       \int_0^T \E \|z(t;\eta^\ast)\|_G^2\, dt + \e(\E\|\eta^\ast\|^2)^{1/2} + \E(y_0, z(0;\eta^\ast))=0.
\end{equation*}
By the observability inequality (see \cite[Lemma
3.2]{yang2016observability}) for the (relaxed) system~\eqref{eq:relaxed eqn} that 
\[
  \E \|z(0;\eta^\ast)\|^2 \le C \int_0^T \E
  \|z(t;\eta^\ast)\|_G^2\, dt,
\]
for some $C>0$, independent of $\beta$, and thus
\begin{equation}
  \label{eq:0924-2}
  \frac{1}{C} \E \|z(0;\eta^\ast)\|^2 + \e(\E\|\eta^\ast\|^2)^{1/2} + \E(y_0,
  z(0;\eta^\ast))\le 0.
\end{equation}
Consequently,
\[
  \frac{1}{C} \E \|z(0;\eta^\ast)\|^2\le -  \E(y_0,
  z(0;\eta^\ast))\le (\E\|y_0\|^2)^{1/2} (\E \|z(0;\eta^\ast)\|^2)^{1/2},
\]
or
\[
  (\E \|z(0;\eta^\ast)\|^2)^{1/2} \le C (\E\|y_0\|^2)^{1/2}.
\]
Using \eqref{eq:0924-2} again, we have
\[
  \e(\E\|\eta^\ast\|^2)^{1/2} \le (\E\|y_0\|^2)^{1/2} (\E
  \|z(0;\eta^\ast)\|^2)^{1/2} \le C \E \|y_0\|^2,
\]
or equivalently,
\[
  (\E\|\eta^\ast\|^2)^{1/2}  \le \frac{C}{\e} \E \|y_0\|^2.
\]
Therefore,
\begin{align*}
  \int_0^T \E \|u^\ast(t)\|_G^2\, dt
  &  = \int_0^T \E \| z(t;\eta^\ast)\|_G^2\, dt
   \le \int_0^T \E \| z(t;\eta^\ast)\|^2\, dt\\
  & \le C \E \|\eta^\ast\|^2
  \le C (\E \|y_0\|^2)^2,
\end{align*}
which completes the proof.
\end{proof}

\section{A relaxed optimal actuator location problem}
\label{sec:3}
Without the compactness of $\cW$ defined in \eqref{eq:W}, it seems very
difficult to solve the classical optimal actuator location problem directly.
Instead, we study a relaxed problem and provide a solution in the framework of a
two-person zero sum game via Nash equilibrium. To this end, define
\begin{equation}
  \label{eq:B}
  \cB = \left\{\beta \in L^\infty(D;[0,1]) \mid \|\beta\|^2 = \alpha |D|\right\}.
\end{equation}
Note that the set $\cB$ is a relaxation of the set $\{\chi_G\mid G\in
\cW\}$.

For any $\beta\in \cB$, consider the following equation
\begin{equation}
  \label{eq:relaxed eqn}
  \begin{cases}
d y = A y\, dt + 
\beta u(t)\, dt + a(t) y\, dw(t),
& t\in (0,T),\\
y(0) = y_0.
\end{cases}
\end{equation}
We denote by $y(\cdot;\beta,u)$ the solution of equation
\eqref{eq:relaxed eqn}, and say the system \eqref{eq:relaxed eqn} {\it
  approximately controllable} at time $T$ if for any $\e>0$, there exists $u\in L^2_\cF(0,T;L^2(D))$ such
that $\E \|y(T;\beta, u)\|^2 \le \e$. Accordingly, the problem
\eqref{eq:norm optimal} is replaced by 
\begin{equation}
  \label{eq:norm optimal relax}
  N_\e(\beta) = \inf \{ \E \|u\|^2_{L^2((0,T)\times D)} \mid \E \|y(T;\beta,
  u)\|^2\le \e \},
\end{equation}
and the classical optimal actuator location problem \eqref{eq:optimal
  location} is changed into the following relaxed problem
\begin{equation}
  \label{eq:optimal location relax}
  N_\e(\beta^\ast) = \inf_{\beta\in\cB} N_\e(\beta).
\end{equation}
Any solution $\beta^\ast$ to the problem \eqref{eq:optimal location
  relax} is called a {\it relaxed optimal actuator location} of the
minimal norm controls.

\begin{theorem} 
  \label{thm:nash}
  There exists at least one solution of the problem \eqref{eq:optimal location
    relax}. In addition, $\beta^\ast$ is a relaxed optimal actuator
  location of the  minimal norm controls if and only if there
  exists $\eta^\ast\in \cU_M$ such that the pair
  $(\beta^\ast,\eta^\ast)$ is a Nash equilibrium
  of the following two-person zero-sum game problem: to find
  $(\beta^\ast,\eta^\ast)\in\cB\times \cU_M$ such that
\begin{align*}
& \frac{1}{2}\int_0^T \E \|\beta^\ast z(t;\eta^\ast)\|^2 dt + \e(\E \|\eta^\ast\|^2)^{1/2}+
  \E(y_0, z(0;\eta^\ast))\\
  &\quad = 
\sup_{\beta\in\cB}\left[\frac{1}{2}\int_0^T \E \|\beta z(t;\eta^\ast)\|^2 dt + \e(\E \|\eta^\ast\|^2)^{1/2}+
  \E(y_0, z(0;\eta^\ast))\right],\\
 & \frac{1}{2}\int_0^T \E \|\beta^\ast z(t;\eta^\ast)\|^2 dt + \e(\E \|\eta^\ast\|^2)^{1/2}+
  \E(y_0, z(0;\eta^\ast))\\
  &\quad = 
\inf_{\eta\in\cU_M}\left[\frac{1}{2}\int_0^T \E \|\beta^\ast z(t;\eta)\|^2 dt + \e(\E \|\eta\|^2)^{1/2}+
  \E(y_0, z(0;\eta))\right],
\end{align*}
where $\cU_M = \{\eta \in L^2(\Omega,\cF_T,\bP;L^2(D))\mid \E \|\eta\|^2\le
M\}$, for some constant $M>0$ (see Remark \ref{rem:1}).
\end{theorem}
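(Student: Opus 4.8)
The theorem has two parts: existence of a solution $\beta^\ast$ to the relaxed problem \eqref{eq:optimal location relax}, and the Nash-equilibrium characterization. I would first establish the variational formula $N_\e(\beta) = \sup_{\eta} \left[ -\tfrac12\int_0^T \E\|\beta z(t;\eta)\|^2\,dt - \e(\E\|\eta\|^2)^{1/2} - \E(y_0,z(0;\eta))\right]$, i.e. $N_\e(\beta) = -\min_\eta \cJ_\e^\beta(\eta)$ where $\cJ_\e^\beta$ is the functional from \eqref{eq:J_epsilon} with $\|\cdot\|_G$ replaced by $\|\beta\,\cdot\|$; this is exactly the content of Theorem \ref{thm:main} applied to the relaxed equation \eqref{eq:relaxed eqn}, together with the duality identity \eqref{eq:9-1}. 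From the bound in Theorem \ref{thm:main} one sees that the minimizer $\eta^\ast$ of $\cJ_\e^\beta$ satisfies $\E\|\eta^\ast\|^2 \le M$ for $M = (C/\e)^2(\E\|y_0\|^2)^2$ uniformly in $\beta$ (the observability constant $C$ being independent of $\beta$, as already noted after \eqref{eq:0924-2}), which justifies restricting the inner problem to $\cU_M$ and identifies the right-hand side of the min-formula. Hence $N_\e(\beta) = -\min_{\eta\in\cU_M}\cJ_\e^\beta(\eta) = \max_{\eta\in\cU_M}\bigl[-\cJ_\e^\beta(\eta)\bigr]$.

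Next I would set up the game. Define $\Phi(\beta,\eta) = \tfrac12\int_0^T\E\|\beta z(t;\eta)\|^2\,dt + \e(\E\|\eta\|^2)^{1/2} + \E(y_0,z(0;\eta))$ on $\cB\times\cU_M$, so that $\inf_{\beta\in\cB} N_\e(\beta) = \inf_{\beta}\bigl(-\min_{\eta}\Phi(\beta,\eta)\bigr) = -\sup_{\beta}\min_{\eta}\Phi(\beta,\eta)$. The map $\eta\mapsto\Phi(\beta,\eta)$ is convex and continuous (same computation as in Lemma \ref{lem:J_min}, the quadratic term being convex in $\eta$ for fixed $\beta$ since $\beta\ge 0$), and $\cU_M$ is convex, bounded, weakly compact; the map $\beta\mapsto\Phi(\beta,\eta)$ is concave — in fact affine — in $\beta^2$, hence concave on the convex set $\cB$, and is weak-$\ast$ continuous. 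By Sion's minimax theorem $\sup_{\beta}\min_{\eta}\Phi = \min_{\eta}\sup_{\beta}\Phi$, and a saddle point $(\beta^\ast,\eta^\ast)$ exists; I should verify the sup over $\beta$ is attained, which follows from weak-$\ast$ compactness of $\cB$ in $L^\infty(D;[0,1])$ together with weak-$\ast$ upper semicontinuity of $\beta\mapsto\int_0^T\E\|\beta z(t;\eta)\|^2\,dt$ — here one must be slightly careful because this functional is not weak-$\ast$ continuous for a merely weak-$\ast$ convergent sequence; however, restricting to the relevant sequences and using that $|z(t;\eta)|^2$ is a fixed $L^1$ weight and $\beta^2\rightharpoonup^\ast$ something in $[0,1]$, one gets the needed semicontinuity, or alternatively one argues directly that a maximizing $\beta$ is a bang-bang function determined by level sets of $z(\cdot;\eta^\ast)$ and extracts a limit. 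Existence of the saddle point then gives existence of $\beta^\ast$ solving \eqref{eq:optimal location relax}, since $N_\e(\beta^\ast) = -\min_\eta\Phi(\beta^\ast,\eta) = -\sup_\beta\min_\eta\Phi = \inf_\beta N_\e(\beta)$.

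For the equivalence, the forward direction: if $\beta^\ast$ is optimal then $N_\e(\beta^\ast) = \max_{\eta\in\cU_M}[-\Phi(\beta^\ast,\eta)]$ is attained at some $\eta^\ast$ (the minimizer of $\cJ_\e^{\beta^\ast}$), which gives the second (inf over $\eta$) equation immediately; and $-\Phi(\beta^\ast,\eta^\ast) = N_\e(\beta^\ast) = \sup_\beta N_\e(\beta') \ge \sup_\beta[-\Phi(\beta,\eta^\ast)]$ — wait, this needs the minimax identity: since $-\Phi(\beta^\ast,\eta^\ast) = \sup_{\beta}\min_{\eta}(-\Phi) = \min_\eta\sup_\beta(-\Phi) \ge \sup_\beta(-\Phi(\beta,\eta^\ast))\ge -\Phi(\beta^\ast,\eta^\ast)$, forcing equality throughout, which is exactly the first (sup over $\beta$) equation. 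The reverse direction: if $(\beta^\ast,\eta^\ast)$ is a Nash equilibrium, then $N_\e(\beta^\ast) = \max_\eta[-\Phi(\beta^\ast,\eta)] = -\Phi(\beta^\ast,\eta^\ast)$ by the second equation, and for any $\beta\in\cB$, $N_\e(\beta) = \max_\eta[-\Phi(\beta,\eta)] \le$ ... — here I need the first equation to conclude $-\Phi(\beta^\ast,\eta^\ast)\le -\Phi(\beta,\eta^\ast)\le \max_\eta[-\Phi(\beta,\eta)] = N_\e(\beta)$? No: the first Nash equation says $\beta^\ast$ \emph{maximizes} $\Phi(\cdot,\eta^\ast)$, i.e. $-\Phi(\beta^\ast,\eta^\ast)\le -\Phi(\beta,\eta^\ast)$, so $N_\e(\beta^\ast) = -\Phi(\beta^\ast,\eta^\ast)\le -\Phi(\beta,\eta^\ast)\le\max_\eta[-\Phi(\beta,\eta)]=N_\e(\beta)$ — hmm that shows $\beta^\ast$ is a \emph{minimum}, which is what we want. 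Good, so the logic closes once the minimax identity and the variational formula $N_\e(\beta)=\max_{\eta\in\cU_M}[-\Phi(\beta,\eta)]$ are in hand.

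\begin{proof}[Proof sketch]
The argument rests on three ingredients: (i) the dual variational formula $N_\e(\beta) = \max_{\eta\in\cU_M}[-\Phi(\beta,\eta)]$ with $\Phi$ as above, obtained by applying Theorem \ref{thm:main} to the relaxed equation \eqref{eq:relaxed eqn} and observing, via the uniform-in-$\beta$ observability constant, that the minimizer of $\cJ_\e^\beta$ lies in $\cU_M$; (ii) Sion's minimax theorem applied to $\Phi$, which is convex-continuous in $\eta$ on the weakly compact convex set $\cU_M$ and concave-continuous in $\beta$ on the weak-$\ast$ compact convex set $\cB$, yielding a saddle point $(\beta^\ast,\eta^\ast)$ and the identity $\sup_\beta\min_\eta\Phi=\min_\eta\sup_\beta\Phi$; and (iii) translating the saddle-point/minimax identity into the optimality of $\beta^\ast$ for \eqref{eq:optimal location relax} and into the stated pair of Nash equations, both directions following by elementary manipulations of $\inf$ and $\sup$. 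The main technical obstacle is step (ii), specifically establishing weak-$\ast$ upper semicontinuity of $\beta\mapsto\int_0^T\E\|\beta z(t;\eta)\|^2\,dt$ and the attainment of $\sup_{\beta\in\cB}$; this is handled by writing the functional as integration of $\beta(x)^2$ against the fixed nonnegative $L^1$ weight $\int_0^T\E|z(t,x;\eta)|^2\,dt$ and passing to the limit along a weak-$\ast$ convergent maximizing sequence (whose limit again lies in $\cB$ since the constraint $\|\beta\|^2=\alpha|D|$ is weak-$\ast$ closed on $L^\infty(D;[0,1])$ by testing against the constant function), the continuity in $\eta$ being inherited from the continuous dependence of $z(\cdot;\eta)$ on $\eta$ established implicitly in Section \ref{sec:2}.
\end{proof}
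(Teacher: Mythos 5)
Your overall architecture matches the paper's: a duality formula expressing $N_\e(\beta)$ through $\min_\eta\cJ_\e(\eta;\beta)$ (the paper's Lemma~\ref{lem:3.6} gives $N_\e(\beta)=-2\cJ_\e(\beta)$, so your formula is off by a harmless factor of $2$), a uniform-in-$\beta$ bound on the minimizer justifying the restriction to $\cU_M$, a minimax theorem producing a saddle point, and the elementary translation of the saddle-point identities into optimality of $\beta^\ast$ and the two Nash equations. That last translation is fine and in fact spelled out more carefully than in the paper.

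The genuine gap is in the minimax step. You apply Sion's theorem to $\Phi(\beta,\eta)$ on $\cB\times\cU_M$, asserting that $\beta\mapsto\Phi(\beta,\eta)$ is ``concave --- in fact affine --- in $\beta^2$, hence concave on the convex set $\cB$,'' and that $\cB$ is weak-$\ast$ closed ``by testing against the constant function.'' All three claims fail: $\beta\mapsto\frac12\int_0^T\E\|\beta z(t;\eta)\|^2\,dt=\int_D\beta(x)^2H_\eta(x)\,dx$ with $H_\eta\ge0$ is \emph{convex} in $\beta$, not concave, and you need concavity in the maximizing variable; $\cB=\{\beta\in L^\infty(D;[0,1]):\int_D\beta^2=\alpha|D|\}$ is a sphere-type constraint and is not convex; and it is not weak-$\ast$ closed either (indicators $\chi_{G_n}$ of oscillating sets with $|G_n|=\alpha|D|$ lie in $\cB$ but can converge weak-$\ast$ to the constant $\alpha$, which has $\int\beta^2=\alpha^2|D|$ --- testing against constants only controls $\int\beta$, not $\int\beta^2$). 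So Sion's theorem cannot be invoked on $\cB$ as you have set it up, and the semicontinuity workaround you sketch does not repair the failed convexity hypotheses. The paper's fix is exactly the substitution you gesture at but do not carry out: replace $\beta$ by $\theta=\beta^2$ ranging over $\Theta=\{\theta\in L^\infty(D;[0,1]):\int_D\theta=\alpha|D|\}$, which \emph{is} convex and weak-$\ast$ compact (the constraint is linear), and on which $\theta\mapsto f(\theta,\eta)=-\int_D\theta H_\eta\,dx-\cdots$ is affine and genuinely weak-$\ast$ continuous (since $H_\eta\in L^1(D)$); the von Neumann/Sion theorem then applies with no semicontinuity issue at all, and one passes back to $\beta^\ast=(\theta^\ast)^{1/2}\in\cB$ via the correspondence \eqref{eq:theta-beta}. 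Without this reparametrization your proof does not go through.
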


To prove this theorem, we first study a
variational problem
\begin{equation}
  \label{eq:J-beta}
  \cJ_\e(\beta) = \inf_{\eta\in L^2(\Omega,\cF_T,\bP;L^2(D))} \cJ_\e(\eta;\beta),
\end{equation}
where 
\begin{equation}
  \label{eq:J-z-beta}
  \cJ_\e(\eta;\beta) = \frac{1}{2}\int_0^T \E \|\beta z(t;\eta)\|^2 dt + \e(\E \|\eta\|^2)^{1/2}+
  \E(y_0, z(0;\eta)).
\end{equation}
\begin{lemma}
  \label{lem:3.6}
  Fix $\beta\in \cB$. Suppose $y_0\in L^2(\Omega,\cF_0,\bP;L^2(D))$. Then
  \begin{enumerate}
  \item[(1)] Problem \eqref{eq:J-beta} admits a unique
    solution $\eta^\ast$ in the space $L^2(\Omega,\cF_T,\bP;L^2(D))$, and there
    exists a positive constant $C$, independent of $\beta$ such that
    \begin{equation}
      \label{eq:bound-eta}
    (\E \|\eta^\ast\|^2)^{1/2} \le C \E \|y_0\|^2.  
    \end{equation}
    \item[(2)] The control defined by
\begin{equation}
  \label{eq:u star}
  u^\ast = \beta z(\cdot;\eta^\ast)
\end{equation}
is the minimal norm control to the problem \eqref{eq:norm
  optimal relax}, where $z(\cdot;\eta^\ast)$ is the solution of the adjoint
system \eqref{eq:dual} with $z(T) = \eta^\ast$.
Moreover,  
\begin{equation}
  \label{eq:u star norm}
  N_\e(\beta) = - 2 \cJ_\e(\beta).
\end{equation}
  \end{enumerate}
\end{lemma}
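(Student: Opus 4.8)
The plan is to mirror the argument of Lemma \ref{lem:J_min} and the proof of Theorem \ref{thm:main}, tracking uniformity in $\beta$ throughout. First I would establish part (1). Fix $\beta\in\cB$. The functional $\cJ_\e(\cdot;\beta)$ is convex and continuous by exactly the computations in Lemma \ref{lem:J_min}, since $\beta$ is bounded and the map $\eta\mapsto z(\cdot;\eta)$ is linear. For coercivity, I would again split into two cases according to whether $\liminf_n\int_0^T\E\|\beta z(t;\wt\eta_n)\|^2\,dt$ is positive or zero, where $\wt\eta_n=\eta_n/(\E\|\eta_n\|^2)^{1/2}$. The only subtlety is the zero case: here I would invoke the observability inequality of \cite[Lemma 3.2]{yang2016observability} for the relaxed system \eqref{eq:relaxed eqn}, whose constant $C$ is \emph{independent of $\beta$} (this uniformity is the crucial input from that reference and is exactly what makes the bound \eqref{eq:bound-eta} hold with a $\beta$-independent constant). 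This forces $z(0;\wt\eta)=0$ and yields $\liminf_n \cJ_\e(\eta_n;\beta)/(\E\|\eta_n\|^2)^{1/2}\ge\e>0$, hence coercivity. Convex + continuous + coercive gives existence of a minimizer $\eta^\ast$; strict convexity of the norm term $\e(\E\|\eta\|^2)^{1/2}$ combined with convexity of the quadratic term gives uniqueness (one must check that if two minimizers exist the norm term must be affine along the segment joining them, forcing them to be equal — a short argument). For the bound \eqref{eq:bound-eta}, I would write the Euler--Lagrange equation for $\cJ_\e(\cdot;\beta)$ (testing against $\eta=\eta^\ast$), obtain
\[
\int_0^T\E\|\beta z(t;\eta^\ast)\|^2\,dt+\e(\E\|\eta^\ast\|^2)^{1/2}+\E(y_0,z(0;\eta^\ast))=0,
\]
and then run the identical chain of estimates as in the boundedness part of the proof of Theorem \ref{thm:main}: apply the $\beta$-uniform observability inequality to replace $\int_0^T\E\|\beta z(t;\eta^\ast)\|^2\,dt$ from below by $\frac1C\E\|z(0;\eta^\ast)\|^2$, use Cauchy--Schwarz on $\E(y_0,z(0;\eta^\ast))$ to get $(\E\|z(0;\eta^\ast)\|^2)^{1/2}\le C(\E\|y_0\|^2)^{1/2}$, and feed this back to conclude $(\E\|\eta^\ast\|^2)^{1/2}\le\frac{C}{\e}\E\|y_0\|^2$; absorbing $\e$ into $C$ gives \eqref{eq:bound-eta}.

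For part (2), I would first derive the Euler--Lagrange equation for a general test direction $\eta$,
\[
\int_0^T\E(\beta z(t;\eta^\ast),\beta z(t;\eta))\,dt+\e\cdot\frac{\E(\eta^\ast,\eta)}{(\E\|\eta^\ast\|^2)^{1/2}}+\E(y_0,z(0;\eta))=0,
\]
by the same one-sided difference-quotient computation (dividing by $h$ and letting $h\to0^\pm$) used in the proof of Theorem \ref{thm:main}, together with the differentiability of $\eta\mapsto(\E\|\eta\|^2)^{1/2}$ at $\eta^\ast\ne0$ computed there. Next I would apply \Ito's formula to $(y(\cdot;\beta,u),z(\cdot;\eta))$ for the relaxed system \eqref{eq:relaxed eqn}; since the control now enters through $\beta u$ rather than $\chi_G u$, the duality identity becomes
\[
\E(y_0,z(0;\eta))=\E(y(T;\beta,u),\eta)-\int_0^T\E(\beta z(t;\eta),u)\,dt.
\]
Taking $u=u^\ast=\beta z(\cdot;\eta^\ast)$ and substituting into the Euler--Lagrange identity, the terms involving $\beta z(\cdot;\eta^\ast)$ combine so that $\E(y(T;\beta,u^\ast),\eta)+\e\,\E(\eta^\ast,\eta)/(\E\|\eta^\ast\|^2)^{1/2}=0$ for all $\eta$; bounding by Cauchy--Schwarz over all $\eta$ shows $(\E\|y(T;\beta,u^\ast)\|^2)^{1/2}\le\e$, so $u^\ast$ is admissible, and choosing $\eta=\eta^\ast$ gives $\E(y(T;\beta,u^\ast),\eta^\ast)=-\e(\E\|\eta^\ast\|^2)^{1/2}$. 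The optimality of $u^\ast$ among all admissible controls then follows verbatim as in Theorem \ref{thm:main}: for any admissible $\wt u$, $\E(y(T;\beta,\wt u),\eta^\ast)\ge-\e(\E\|\eta^\ast\|^2)^{1/2}=\E(y(T;\beta,u^\ast),\eta^\ast)$, and after using the duality identity and $u^\ast=\beta z(\cdot;\eta^\ast)$ one gets $\int_0^T\E\|u^\ast\|^2\,dt\le\int_0^T\E(u^\ast,\wt u)\,dt\le(\int_0^T\E\|u^\ast\|^2)^{1/2}(\int_0^T\E\|\wt u\|^2)^{1/2}$, whence $N_\e(\beta)=\int_0^T\E\|u^\ast\|^2\,dt$.

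Finally, for the identity \eqref{eq:u star norm}, I would compute $\cJ_\e(\beta)=\cJ_\e(\eta^\ast;\beta)$ directly. Using the Euler--Lagrange identity with $\eta=\eta^\ast$ we have $\int_0^T\E\|\beta z(t;\eta^\ast)\|^2\,dt+\e(\E\|\eta^\ast\|^2)^{1/2}=-\E(y_0,z(0;\eta^\ast))$; substituting into
\[
\cJ_\e(\eta^\ast;\beta)=\tfrac12\int_0^T\E\|\beta z(t;\eta^\ast)\|^2\,dt+\e(\E\|\eta^\ast\|^2)^{1/2}+\E(y_0,z(0;\eta^\ast))
\]
and simplifying gives $\cJ_\e(\beta)=-\tfrac12\int_0^T\E\|\beta z(t;\eta^\ast)\|^2\,dt=-\tfrac12\int_0^T\E\|u^\ast\|^2\,dt=-\tfrac12 N_\e(\beta)$, which is \eqref{eq:u star norm}. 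I expect the main obstacle — or rather the main point requiring care — to be the \emph{$\beta$-uniformity} of the observability constant: every estimate above that yields a $\beta$-independent $C$ rests on the claim that \cite[Lemma 3.2]{yang2016observability} provides an observability inequality for \eqref{eq:relaxed eqn} with a constant that does not depend on $\beta\in\cB$. One should verify that the cited result indeed applies to the weighted control term $\beta$ (not merely to characteristic functions $\chi_G$), since the whole subsequent development — in particular the existence of the relaxed optimal actuator location and the Nash equilibrium characterization in Theorem \ref{thm:nash} — depends on this uniformity. The remaining steps are routine adaptations of the $\chi_G$ case already carried out in Section \ref{sec:2}.
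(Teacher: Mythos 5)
Your proposal is correct and follows essentially the same route as the paper: the paper likewise disposes of existence and optimality of $u^\ast$ by appealing to the arguments of Lemma \ref{lem:J_min} and Theorem \ref{thm:main}, then derives the Euler--Lagrange identity, applies the $\beta$-uniform observability inequality of \cite[Lemma 3.2]{yang2016observability} to obtain \eqref{eq:bound-eta}, and gets \eqref{eq:u star norm} by the same substitution of the Euler--Lagrange identity at $\eta=\eta^\ast$. The one place you go beyond the paper is in flagging uniqueness (which the paper asserts but never argues); note only that affineness of a Hilbert-space norm along a segment forces the two endpoints to be nonnegatively proportional rather than equal, so your sketched uniqueness argument still needs the equality case of the quadratic term, together with observability, to close.
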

\begin{proof}
In the same spirit of Lemma~\ref{lem:J_min} and Theorem~\ref{thm:main}, the
existence of $\eta^\ast$ and optimality of $u^\ast$ can be similarly verified.

Moreover, we can also obtain the following Euler-Lagrange equation
\begin{equation*}
\int_0^T \E(\beta^2z(t;\eta^\ast), z(t;\eta))\, dt + \e \cdot \frac{\E(\eta^\ast,
    \eta)}{(\E\|\eta^\ast\|^2)^{1/2}} + \E(y_0, z(0;\eta)) =0,
\end{equation*}
for any $\eta\in L^2(\Omega,\cF_T,\bP;L^2(D))$. In particular, choosing
$\eta=\eta^\ast$ yields
\begin{equation}
  \label{eq:0917-3}
       \int_0^T \E \|\beta z(t;\eta^\ast)\|^2\, dt + \e(\E\|\eta^\ast\|^2)^{1/2} + \E(y_0, z(0;\eta^\ast))=0.
\end{equation}
It follows from the observability inequality (see \cite[Lemma
3.2]{yang2016observability}) for the (relaxed) system~\eqref{eq:relaxed eqn} that 
\[
  \frac{1}{C} \E \|z(0;\eta^\ast)\|^2 \le \int_0^T \E
  \|\beta z(t;\eta^\ast)\|^2\, dt,
\]
for some $C>0$, independent of $\beta$, and thus
\begin{equation}
  \label{eq:0917-2}
  \frac{1}{C} \E \|z(0;\eta^\ast)\|^2 + \e(\E\|\eta^\ast\|^2)^{1/2} + \E(y_0,
  z(0;\eta^\ast))\le 0.
\end{equation}
Consequently,
\[
  \frac{1}{C} \E \|z(0;\eta^\ast)\|^2\le -  \E(y_0,
  z(0;\eta^\ast))\le (\E\|y_0\|^2)^{1/2} (\E \|z(0;\eta^\ast)\|^2)^{1/2},
\]
or
\[
  (\E \|z(0;\eta^\ast)\|^2)^{1/2} \le C (\E\|y_0\|^2)^{1/2}.
\]
Using \eqref{eq:0917-2} again, we have
\[
  \e(\E\|\eta^\ast\|^2)^{1/2} \le (\E\|y_0\|^2)^{1/2} (\E
  \|z(0;\eta^\ast)\|^2)^{1/2} \le C \E \|y_0\|^2,
\]
or equivalently,
\[
  (\E\|\eta^\ast\|^2)^{1/2}  \le \frac{C}{\e} \E \|y_0\|^2,
\]
which proves~\eqref{eq:bound-eta}. 

Setting $z(\cdot;\eta^\ast) = u^\ast/\beta$ in \eqref{eq:0917-3} yield
\begin{equation}
  \label{eq:0917}
    \int_0^T \E \|u^\ast(t)\|^2\, dt = - \e(\E\|\eta^\ast\|^2)^{1/2} - \E(y_0, z(0;\eta^\ast)),
\end{equation}
or equivalently
\[
  N_\e(\beta) = - \e(\E\|\eta^\ast\|^2)^{1/2} - \E(y_0, z(0;\eta^\ast)).
\]
On the other hand, we have by \eqref{eq:0917}
\begin{align*}
  \cJ_\e(\beta)
  & = \cJ_\e(\eta^\ast;\beta) = 
    \frac{1}{2}\int_0^T \E \|\beta z(t;\eta^\ast)\|^2 dt + \e(\E \|\eta^\ast\|^2)^{1/2}+  \E(y_0, z(0;\eta^\ast))\\
  &= \frac{1}{2}\int_0^T \E \|u^\ast(t)\|^2 dt + \e(\E \|\eta^\ast\|^2)^{1/2}+  \E(y_0, z(0;\eta^\ast))\\
  &= \frac{1}{2} \left[ \e(\E \|\eta^\ast\|^2)^{1/2}+  \E(y_0, z(0;\eta^\ast))\right]\\
  & = - \frac{1}{2} N_\e(\beta),
\end{align*}
which concludes the proof.
\end{proof}
\begin{remark}
  \label{rem:1}
  It follows from \eqref{eq:bound-eta} that we can restrict our search of the
  minimizer of $\cJ_\e(\cdot;\beta)$ within a closed ball of $L^2(\Omega,
  \cF_T,\bP;L^2(D))$, and we denote it by $\cU_M$ for some $M>0$.

\end{remark}

Now let's formulate an equivalent problem to \eqref{eq:optimal location relax},
which is a two-person zero sum game. Define
\begin{equation}
  \label{eq:theta}
  \Theta = \left\{\theta\in L^\infty(D;[0,1]) \mid \int_D \theta(x) dx
      = \alpha |D|\right\}.
\end{equation}
It is clear that
\begin{equation}
  \label{eq:theta-beta}
  \beta^2\in \Theta~\text{for any}~\beta\in\cB,
  ~\text{and}~\theta^{1/2}\in\cB~\text{for all}~\theta\in\Theta.
\end{equation}
We also define a functional $f: \Theta \times \cU_M \to \R$ by
\begin{equation}
  \label{eq:F}
  f(\theta, \eta) = -\frac{1}{2}\int_0^T \E \|\theta^{1/2} z(t;\eta)\|^2 dt - \e(\E \|\eta\|^2)^{1/2}-  \E(y_0, z(0;\eta)).
\end{equation}
Then it follows from the relation~\eqref{eq:u star norm} that
\begin{align*}
  & \inf_{\beta\in \cB}\frac{1}{2}N_\e(\beta) = \inf_{\beta\in \cB} -\cJ_\e(\beta)\\
  =\, & \inf_{\theta \in \Theta} \sup_{\eta\in \cU_M} f(\theta, \eta).
\end{align*}
Therefore, seeking a minimizer $\beta^\ast\in\cB$ for $N(\beta)$ amounts to
finding a minimizer $\theta^\ast$ for $\sup_{\eta\in \cU_M} f(\theta, \eta)$.

\begin{proof}[Proof of Theorem \ref{thm:nash}]
  In the following, we will solve both the problems
  \[
    \inf_{\theta \in \Theta} \sup_{\eta\in \cU_M} f(\theta,
    \eta)\quad\text{and}\quad \sup_{\eta\in \cU_M}\inf_{\theta \in \Theta}  f(\theta, \eta),
  \]
  at the same time.
\begin{enumerate}
\item[(1)] Let us equip $L^\infty(D)$ with the weak$^\ast$ topology. Then $\Theta$ is
compact and convex in $L^\infty(D)$.
\item[(2)] Since $\cU_M$ is a closed ball, it is weak
compact and convex in $L^2(\Omega,\cF_T,\bP;L^2(D))$.
\item[(3)] Apparently, for each $\eta\in \cU_M$, the function
\[
  \theta \mapsto f(\theta, \eta)
\]
is convex and continuous.
\item[(4)] Fix $\theta\in \Theta$, assume $\eta_n$ converges weakly in $\cU_M$
  to $\eta$. Then $z(\cdot;\eta_n)$ converges weakly to $z(\cdot;\eta)$. In
  other words, the function
  \[
    \eta \mapsto f(\theta, \eta)
  \]
  is continuous. It is also easy to see that the function is concave.
\end{enumerate}
It follows from (1)-(4), and the Von Neumann minimax theorem (see \cite[Theorem
2.7.2]{aubin2011applied}) that there exist $\theta^\ast\in \Theta$ and $\eta\in
\cU_M$ that
\[
  f(\theta^\ast, \eta) \le f(\theta^\ast, \eta^\ast) \le f(\theta, \eta^\ast),\quad
  \forall \theta\in \Theta, \eta\in \cU_M.
\]
Note that $\cJ_\e(\eta;\beta) = - f(\theta, \eta)$, where $\theta = \beta^2$,
and thus
\[
  \cJ_\e(\eta^\ast;\beta) \le \cJ_\e(\eta^\ast;\beta^\ast) \le
  \cJ_\e(\eta;\beta^\ast),\quad \forall \beta\in \cB, \eta\in \cU_M,
\]
where $\beta^\ast = (\theta^\ast)^{1/2}$.

By definition, the pair $(\beta^\ast, \eta^\ast)$ is a Nash equilibrium of the
two-person zero sum game problem: to find $(\beta^\ast, \eta^\ast)\in \cB\times
\cU_M$ such that
\[
  \cJ_\e(\eta^\ast;\beta^\ast) = \sup_{\beta\in \cB} \cJ_\e(\eta^\ast;\beta),
\]
and
\[
  \cJ_\e(\eta^\ast;\beta^\ast) = \inf_{\theta\in \cU_M} \cJ_\e(\eta;\beta^\ast).
\]
This completes the proof.
\end{proof}

\section{Existence of classical optimal actuator location}
\label{sec:4}
In this section, we will prove our main result on the existence of classical
optimal actuator location, that is, we solve the problem \eqref{eq:optimal
  location}. To this end, let's take a closer look at the corresponding relaxed
problem. It follows from Theorem \ref{thm:nash} that
\begin{align*}
  \cJ_\e(\eta^\ast;\beta^\ast)
  & = \sup_{\beta\in \cB}\cJ_\e(\eta^\ast;\beta)\\
  & = \sup_{\beta\in \cB}\left[\frac{1}{2}\int_0^T \E \|\beta z(t;\eta^\ast)\|^2 dt + \e(\E \|\eta^\ast\|^2)^{1/2}+
    \E(y_0, z(0;\eta^\ast))\right]\\
  & = \frac{1}{2}\sup_{\beta\in \cB}\int_0^T \E \|\beta z(t;\eta^\ast)\|^2 dt + \frac{1}{2}\left[\e(\E \|\eta^\ast\|^2)^{1/2}+
    \E(y_0, z(0;\eta^\ast))\right]\\
  & = \frac{1}{2}\sup_{\theta\in \Theta}\int_D\theta(x)\int_0^T \E |z(t;\eta^\ast)|^2\, dt\, dx+ \frac{1}{2}\left[\e(\E \|\eta^\ast\|^2)^{1/2}+
    \E(y_0, z(0;\eta^\ast))\right].
\end{align*}
To wit, if $\beta^\ast$ is a relaxed optimal actuator location, then
$\theta^\ast = (\beta^\ast)^2$ solves the the following problem
\[
  \sup_{\theta\in \Theta}\int_D\theta(x)\int_0^T \E |z(t;\eta^\ast)|^2\, dt\, dx.
\]
Let
\[
  H(x) := \int_0^T \E |z(t,x;\eta^\ast)|^2\, dt,
\]
then $H(x)\ge 0$ for $x\in D$-a.e.

Next, we show that
\[
  \theta^\ast = \text{argmax}_{\theta\in \Theta} \int_D \theta(x) H(x)\, dx
\]
is an indicator function. Therefore, $\beta^\ast = (\theta^\ast)^{1/2}$ is also
an indicator function, and thus $\beta^\ast \in \cB$, i.e., the relaxed optimal
actuator location is indeed a classical optimal actuator location.

Define
\[
  c(\alpha) = \sup\{c\ge 0: |\{H\ge c\}| \ge \alpha |D|\}.
\]
Let $\{c_n\}$ be an increasing sequence that converges to $c(\alpha)$. Then for
any $n\in \N$, we have $|\{H\ge c_n\}|\ge \alpha|D|$. But
\[
  \{H\ge c(\alpha)\} = \bigcap_{n=1}^\infty\{H\ge c_n\},
\]
and thus
\begin{equation}
  \label{eq:0926-1}
  |\{H\ge c(\alpha)\}| = \lim_{n\to\infty} |\{H\ge c_n\}| \ge \alpha |D|.
\end{equation}
On the other hand, for any $\e>0$, we have
\[
  |\{H\ge c(\alpha)+\e\}|< \alpha|D|.
\]
Note that
\[
  \{H> c(\alpha)\} = \bigcup_{\e>0}\{H\ge c(\alpha) + \e\},
\]
and so
\begin{equation}
  \label{eq:0926-2}
  |\{H> c(\alpha)\}| = \lim_{\e\to 0} \{H\ge c(\alpha)+\e\} \le \alpha |D|.
\end{equation}
We claim that 
\begin{equation}
    \label{eq:0926-3}
\begin{aligned}
  \theta^\ast(x) & = 1\, \text{for}~x\in \{H>c(\alpha)\}\, \text{a.e.}\\
  \theta^\ast(x) & = 0\, \text{for}~x\in \{H<c(\alpha)\}\, \text{a.e.}
\end{aligned}
\end{equation}
Otherwise, let
\[
  A = \{H>c(\alpha)\}\cap \{\theta^\ast <1\}\quad\text{and}\quad B = \{H<
  c(\alpha)\}\cap \{\theta^\ast > 0\},
\]
then
\[
  |A| >0\quad\text{or}\quad |B|>0.
\]

Let's first assume that $|A|>0$. In this case, we define
\begin{equation}
  \label{eq:0927-1}
    \wt{\theta}(x) =
  \begin{cases}
    1, & \{H>c(\alpha)\}\cup E;\\
    0, &\text{otherwise},
  \end{cases}
\end{equation}
where
\[
  E \subseteq \{H=c(\alpha)\}\quad\text{and}\quad |E| = \alpha|D| - |\{H>c(\alpha)\}|.
\]
It is noted that $\wt{\theta}$ is well defined by \eqref{eq:0926-1} and
\eqref{eq:0926-2}, and $\wt{\theta}\in \Theta$. Thus
\[
  \int_{\theta^\ast>\wt{\theta}}(\theta^\ast-\wt{\theta})\, dx +
  \int_{\theta^\ast< \wt{\theta}}(\theta^\ast-\wt{\theta})\, dx = \int_D
  (\theta^\ast-\wt{\theta})\, dx =0.
\]
Since
\begin{align*}
  \{\wt{\theta}>\theta^\ast\} & = \{\wt{\theta}=1\}\cap \{\theta^\ast <1\} = \left(\{H>c(\alpha)\} \cup E\right) \cap \{\theta^\ast <1\} \\
  & = A \cup \left(E\cap \{\theta^\ast <1\}\right),
\end{align*}
we obtain
\begin{align*}
  \int_{\theta^\ast< \wt{\theta}} (\theta^\ast - \wt{\theta}) H(x)\, dx
  & = \int_{\theta^\ast< \wt{\theta}}\chi_{A \cup \left(E\cap \{\theta^\ast <1\}\right)} (\theta^\ast - \wt{\theta}) H(x)\, dx\\
  & = \int_{\theta^\ast< \wt{\theta}}\chi_A (\theta^\ast - \wt{\theta}) H(x)\, dx \\
  &\qquad + \int_{\theta^\ast< \wt{\theta}}\chi_{E\cap \{\theta^\ast <1\}}(\theta^\ast - \wt{\theta}) H(x)\, dx\\
  & = \int_{\theta^\ast< \wt{\theta}}\chi_A (\theta^\ast - \wt{\theta}) H(x)\, dx \\
  &\qquad + \int_{\theta^\ast< \wt{\theta}}\chi_{E\cap \{\theta^\ast <1\}}(\theta^\ast - \wt{\theta}) c(\alpha)\, dx\\
  & < \int_{\theta^\ast< \wt{\theta}}\chi_A (\theta^\ast - \wt{\theta}) c(\alpha)\, dx \\
  & \qquad +  \int_{\theta^\ast< \wt{\theta}}\chi_{E\cap \{\theta^\ast <1\}}(\theta^\ast - \wt{\theta}) c(\alpha)\, dx\\
  & = c(\alpha)\int_{\theta^\ast< \wt{\theta}}(\chi_A + \chi_{E\cap \{\theta^\ast <1\}}) (\theta^\ast - \wt{\theta}) \, dx\\
  & =  c(\alpha) \int_{\theta^\ast< \wt{\theta}}(\theta^\ast - \wt{\theta}) \, dx.
\end{align*}
Also, we have
\[
  \{\wt{\theta}<\theta^\ast\} \subseteq \{\wt{\theta}>\theta^\ast\}^c \subseteq
  A^c\subseteq \{H\le c(\alpha)\},
\]
and thus
\begin{align*}
  \int_{\theta^\ast>\wt{\theta}}(\theta^\ast-\wt{\theta}) H(x)\, dx
  & = \int_{\theta^\ast>\wt{\theta}}\chi_{\{H\le c(\alpha)\}}(\theta^\ast-\wt{\theta}) H(x)\, dx\\
  & \le c(\alpha) \int_{\theta^\ast>\wt{\theta}}(\theta^\ast-\wt{\theta}) \, dx.
\end{align*}
Consequently,
\begin{align*}
  \int_D (\theta^\ast-\wt{\theta})H(x)\, dx
  & = \int_{\theta^\ast>\wt{\theta}}(\theta^\ast-\wt{\theta}) H(x)\, dx + \int_{\theta^\ast<\wt{\theta}}(\theta^\ast-\wt{\theta}) H(x)\, dx\\
    & < c(\alpha)\left[\int_{\theta^\ast>\wt{\theta}}(\theta^\ast-\wt{\theta})\, dx +
  \int_{\theta^\ast< \wt{\theta}}(\theta^\ast-\wt{\theta})\, dx\right]\\
    & = 0, 
\end{align*}
or
\begin{equation}
  \label{eq:0927-2}
  \int_D \theta^\ast(x)H(x)\, dx < \int_D \wt{\theta}(x)H(x)\, dx
\end{equation}
which contradicts with the fact that
\[
  \int_D \theta^\ast(x)H(x)\, dx = \sup_{\theta\in \Theta}\int_D \theta(x)H(x)\, dx.
\]

In the case that $|B|>0$, we use the same $\wt{\theta}$ defined in
\eqref{eq:0927-1}, and note that
\begin{align*}
  \{\theta^\ast> \wt{\theta}\}
  & = \{\theta^\ast > 0 \}\cap \{\wt{\theta} =0\}\\
  &=   \{\theta^\ast > 0 \}\cap \{H\le c(\alpha)\} \cap E^c\\
  &   = \{\theta^\ast > 0 \}\cap \{H< c(\alpha)\}\\
  & = B.
\end{align*}
By the similar argument, we also get the (strict) inequality \eqref{eq:0927-2},
and thus a contradiction.

To sum up, if $\beta^\ast$ is a relaxed optimal actuator location, then
\[
  \theta^\ast = (\beta^\ast)^2 = \chi_{\{H>c(\alpha)\}} + \chi_E,
\]
where $E \subseteq \{H = c(\alpha)\}$ and $|E| = \alpha|D| - |\{H>c(\alpha)\}|$.

Therefore, by Theorem \ref{thm:nash} we find an optimal actuator location $\beta^\ast$ to our
classical problem \eqref{eq:optimal location}, and thus we finished the proof of
Theorem \ref{thm:main2}.

\bibliographystyle{abbrv}

\end{document}